\documentclass[12pt,letterpaper]{amsart}

\usepackage{color}
\usepackage{tikz-cd}
\usetikzlibrary{arrows.meta, positioning}
\usepackage{esint,amssymb}
\usepackage{graphicx}
\usepackage{MnSymbol}
\usepackage{mathtools}
\usepackage[colorlinks=true, pdfstartview=FitV, linkcolor=blue, citecolor=pink, urlcolor=blue,pagebackref=false]{hyperref}
\usepackage{microtype}
\usepackage{amsmath}
\usepackage{xifthen}
\usepackage{verbatim}
\definecolor{darkgreen}{rgb}{0,0.5,0}
\definecolor{darkblue}{rgb}{0,0,0.7}
\definecolor{darkred}{rgb}{0.9,0.1,0.1}
\usepackage[all]{xy}
\usepackage[makeroom]{cancel}
\usepackage{enumitem}
\makeatletter
\def\@tocline#1#2#3#4#5#6#7{\relax
  \ifnum #1>\c@tocdepth 
  \else
    \par \addpenalty\@secpenalty\addvspace{#2}%
    \begingroup \hyphenpenalty\@M
    \@ifempty{#4}{%
      \@tempdima\csname r@tocindent\number#1\endcsname\relax
    }{%
      \@tempdima#4\relax
    }%
    \parindent\z@ \leftskip#3\relax \advance\leftskip\@tempdima\relax
    \rightskip\@pnumwidth plus4em \parfillskip-\@pnumwidth
    #5\leavevmode\hskip-\@tempdima
      \ifcase #1
       \or\or \hskip 1em \or \hskip 2em \else \hskip 3em \fi%
      #6\nobreak\relax
    \dotfill\hbox to\@pnumwidth{\@tocpagenum{#7}}\par
    \nobreak
    \endgroup
  \fi}
\makeatother

\newtheorem{theorem}{Theorem}
\newtheorem{proposition}[theorem]{Proposition}
\newtheorem{lemma}[theorem]{Lemma}
\newtheorem{corollary}[theorem]{Corollary}

\theoremstyle{definition}

\newcommand{\cref}[1]{Corollary~\ref{c.#1}}

\numberwithin{equation}{section}
\numberwithin{theorem}{section}

\newcommand{\R}{\mathbb{R}}

\renewcommand{\subset}{\subseteq}

\newcommand{\eps}{\varepsilon}

\newcommand{\test}[1][]{%
\ifthenelse{\equal{#1}{}}{omitted}{given}%
}

\renewcommand{\tilde}{\widetilde}

\renewcommand{\part}{\partial}

\usepackage{dsfont}
\usepackage{slashed}

\newcommand{\ad}{\operatorname{ad}}

\begin{document}
\title{Off-diagonal bounds for spectral projector kernels}

\author[Panagiotis Dimakis]{Panagiotis Dimakis}
\keywords{}
\subjclass[2010]{}
\date{\today}
\begin{abstract}
We study the growth of the spectral function of the Laplace-Beltrami operator on a Riemannian manifold $M$ of dimension $n$. For $n\geq 3$ we show that for any $x\in M$ there is a full measure subset $Z_x\subset M$ such that for all $y\in Z_x$ $$E_{\Lambda}(x,y) = O_{M,x,y,\eps}(\Lambda^{n/2}(\log\Lambda)^{3/2}(\log\log\Lambda)^{1/2+\eps}). $$ For $n=2$ we prove a stronger statement. For any $x\in M$ there is a full measure subset $Z_x\subset M$ such that for all $y\in Z_x$ $$E_{\Lambda}(x,y) = O_{M,x,y,\eps}(\Lambda^{5/6}(\log\Lambda)^{1/6}(\log\log\Lambda)^{1/6+\eps}) .$$
\end{abstract}

\maketitle

\tableofcontents

\section{Introduction}

Let $(M^n,g)$ be an $n-$dimensional smooth, closed, connected Riemannian manifold. Let $\Delta$ denote the non-negative Laplace-Beltrami operator and write 
\begin{equation*}
\Delta\phi_j = \lambda_j^2\phi_j, ~ 0=\lambda_0\le \lambda_1\le \lambda_2\le\dots
\end{equation*}
for an orthonormal basis of eigenfunctions $\{\phi_j(x)\}$. The integral kernel of the spectral projection operator 
\[
\mathbf 1_{(0,\Lambda)}\sqrt{\Delta}
\]
is given by the spectral function 
\begin{equation*}
E_{\Lambda}(x,y) = \sum\limits_{0<\lambda_j<\Lambda}\phi_j(x)\overline{\phi_j(y)}.
\end{equation*}
By the Weyl formulae \cite{Levitan1953,Avakumovic1956, Hormander1968}, 
\begin{equation*}
\begin{split}
E_{\Lambda}(x,x) &=   (2\pi)^{-n}\mathrm{vol}_{\R^n}(B)\mathrm{vol}_g(M)\Lambda^n + O(\Lambda^{n-1})\\
E_{\Lambda}(x,y) &=  O(\Lambda^{n-1}),
\end{split}
\end{equation*}
for $x\neq y$. These bounds are sharp and attained on a round $n$-sphere. Assuming that the geodesic loops at $x$ and respectively the geodesics joining $x$ and $y$ are of measure zero these bounds can be improved to $o(\Lambda^{n-1})$ \cite{Guillemin1975}. A plethora of refinements along this direction have been obtained over the years, see for example \cite{Petridis2002, Canzani2015, Canzani2018, Canzani2021} and references therein. 

Lower bounds under non-conjugacy along any shortest
geodesic segment joining $x,y$ were obtained in \cite{Polterovich2007}. Specifically, for such pairs 
\[
E_{\Lambda}(x,y) \neq o(\Lambda^{(n-1)/2}). 
\]
In \cite{Polterovich2009} the authors study the average growth of the spectral function and conjecture that the typical rate of growth should be $O(\Lambda^{(n-1)/2})$. The first main theorem in this paper shows that generically the rate of growth is $O(\Lambda^{n/2}\log(\Lambda)^{3/2+\eps})$, only $1/2$ away from the conjectured typical growth. Specifically, we prove

\begin{theorem}\label{M1}
Given any point $x\in M$ there exists a set $Z_x\subset M$ of full measure such that for every $y\in Z_x$ and for every $\eps>0$ the bound 
\begin{equation}\label{E1}
E_{\Lambda}(x,y) = O_{M,x,y,\eps}(\Lambda^{n/2}(\log\Lambda)^{3/2}(\log\log\Lambda)^{1/2+\eps})
\end{equation}
holds for \textbf{every} sufficiently large real $\Lambda$. 
\end{theorem}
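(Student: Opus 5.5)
Fix $x\in M$ and view $y\mapsto E_\Lambda(x,y)$ as a random-like function of $y$. The idea is an $L^2$-in-$y$ estimate, then Borel--Cantelli along a sequence of $\Lambda$, then interpolation between sequence points. By orthonormality,
\[
\int_M |E_\Lambda(x,y)|^2\,dy=\sum_{0<\lambda_j<\Lambda}|\phi_j(x)|^2=E_\Lambda(x,x)=O(\Lambda^n)
\]
by the local Weyl law. Chebyshev then gives, for any $A>0$,
\[
\mathrm{vol}\{y: |E_\Lambda(x,y)|>A\}\le C\Lambda^n/A^2 .
\]
Taking $A=\Lambda^{n/2}\psi(\Lambda)$ makes this measure $O(\psi(\Lambda)^{-2})$. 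We want it summable over a discrete family of $\Lambda$'s.

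The difficulty is passing from a discrete family to every real $\Lambda$. $E_\Lambda(x,y)$ jumps at each eigenvalue, and the number of eigenvalues in $[\Lambda,2\Lambda]$ is about $\Lambda^n$, so a naive union bound fails. I would use a dyadic chaining (Rademacher--Menshov type) argument. Fix the dyadic block $[2^k,2^{k+1}]$, which contains $N\approx 2^{kn}$ eigenvalues, and write $a_j(y)=\phi_j(x)\overline{\phi_j(y)}$. These functions are orthogonal in $L^2(dy)$. Every partial sum $\sum_{2^k\le\lambda_j<\Lambda}a_j$ is a sum of at most $\log_2 N\approx kn$ dyadic sub-blocks of indices. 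The Rademacher--Menshov maximal inequality then gives
\[
\int\max_{\Lambda\in[2^k,2^{k+1}]}\Big|\sum_{2^k\le\lambda_j<\Lambda}a_j(y)\Big|^2dy\le C(\log N)^2\sum_{2^k\le\lambda_j<2^{k+1}}|\phi_j(x)|^2\lesssim k^2\,2^{kn}.
\]
Hence the maximal function $M_k(y)=\max_{\Lambda\in[2^k,2^{k+1}]}|E_\Lambda(x,y)|$, obtained by adding the $E_{2^k}$ term, satisfies $\|M_k\|_{L^2}^2\lesssim k^2 2^{kn}$. Applying Chebyshev with threshold $2^{kn/2}\,k\,\sqrt{k}\,(\log k)^{1/2+\eps}$ gives a bad-set measure of about $\big(k(\log k)^{1+2\eps}\big)^{-1}$, which is summable in $k$.

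By Borel--Cantelli, almost every $y$ lies in only finitely many bad sets. For such $y$ and large $\Lambda\in[2^k,2^{k+1}]$, since $k\approx\log\Lambda$, we get
\[
|E_\Lambda(x,y)|\le \Lambda^{n/2}(\log\Lambda)^{3/2}(\log\log\Lambda)^{1/2+\eps}.
\]
This exponent matches the statement. To get one full-measure set $Z_x$ that works for every $\eps>0$, intersect over $\eps=1/m$.

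The main obstacle, and the step I would check most carefully, is the maximal inequality. It rests on splitting the eigenvalue index range into dyadic index intervals and using orthogonality of the $a_j$ across disjoint blocks, which gives the $(\log N)^2$ loss; Cauchy--Schwarz over the $\log N$ levels supplies one of those logs. The $n=2$ improvement presumably needs finer input and is not needed here. I would also confirm that $\sum_{\lambda_j<2^{k+1}}|\phi_j(x)|^2\lesssim 2^{kn}$ holds uniformly, which is exactly the local Weyl bound $E_\Lambda(x,x)=O(\Lambda^n)$.
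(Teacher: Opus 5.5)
Your proposal is correct and follows the paper's proof essentially step for step. You use a Rademacher--Menshov maximal bound of size $\lesssim k^2 2^{kn}$ on the dyadic scale, then Chebyshev with threshold $2^{kn/2}k^{3/2}(\log k)^{1/2+\eps}$, then Borel--Cantelli. The paper applies the maximal inequality to all $0<\lambda_j<2^m$ at once rather than to the block plus the $E_{2^k}$ term, which is an immaterial difference. Your remark about intersecting over $\eps=1/m$ to get a single $Z_x$ valid for every $\eps$ fills a small point the paper leaves implicit.
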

This is a genuine power improvement over the general off-diagonal bounds $o(\Lambda^{n-1})$ mentioned above when $n\geq 3$. For $n=2$  we prove the following 
\begin{theorem}\label{M2}
Given any point $x\in M$ there exists a set $Z_x\subset M$ of full measure such that for every $y\in Z_x$ and for every $\eps>0$ the bound 
\begin{equation}\label{E2}
E_{\Lambda}(x,y) = O_{M,x,y,\eps}(\Lambda^{5/6}(\log\Lambda)^{1/6}(\log\log\Lambda)^{1/6+\eps})
\end{equation}
holds for \textbf{every} sufficiently large real $\Lambda$. 
\end{theorem}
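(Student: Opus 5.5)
\emph{Proposed strategy.} The plan is to run two arguments in parallel. The first, which already gives Theorem~\ref{M1}, is an $L^2$-in-$y$ argument. For fixed $x$, the function $y\mapsto E_\Lambda(x,y)=\sum_{\lambda_j<\Lambda}\phi_j(x)\phi_j(y)$ is a partial sum of an orthogonal series in $y$ with coefficients $\phi_j(x)$. Hence $\int_M|E_\Lambda(x,y)|^2\,dy=E_\Lambda(x,x)\asymp\Lambda^n$. For $n\ge 3$ I would apply the Rademacher--Menshov maximal inequality on each dyadic block $\Lambda\in[2^m,2^{m+1}]$, which contains $N\sim 2^{mn}$ eigenvalues. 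This gives $\|\sup_{\Lambda\le 2^{m+1}}|E_\Lambda(x,\cdot)|\|_{L^2}\lesssim m\,2^{mn/2}$. Chebyshev with threshold $t_m=m^{3/2}(\log m)^{1/2+\eps}2^{mn/2}$ gives exceptional sets of measure $\lesssim m^{-1}(\log m)^{-1-2\eps}$, which are summable, and Borel--Cantelli then yields \eqref{E1}. For $n=2$ this only gives $\Lambda^{1+}$, so Theorem~\ref{M2} needs the dimension-specific improvement described next.

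For $n=2$ I would first remove the deterministic smooth part. Fix $\rho\in\mathcal S(\R)$ with $\hat\rho$ supported in a small interval, and write $E_\Lambda=S_\Lambda+R_\Lambda$, where $S_\Lambda(x,y)=\sum_j\phi_j(x)\phi_j(y)\,(\rho*\mathbf 1_{(0,\Lambda)})(\lambda_j)$. Then $S_\Lambda$ is given by the short-time wave parametrix (H\"ormander). For $y$ off a null set (away from $x$ and from points conjugate to $x$ along short geodesics), stationary phase gives $S_\Lambda(x,y)=O(\Lambda^{(n-1)/2})=O(\Lambda^{1/2})$. The remainder has spectral weights $w_j(\Lambda)$ concentrated within $O(1)$ of $\Lambda$ with rapid decay. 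Therefore $\int_M|R_\Lambda(x,y)|^2\,dy=\sum_j|\phi_j(x)|^2|w_j|^2\lesssim\Lambda^{n-1}=\Lambda$ by local Weyl on unit windows. This recovers the conjectured $\Lambda^{(n-1)/2}$ size, but only for a \emph{single} $\Lambda$.

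To pass to every $\Lambda$, I would use a grid of spacing $h=h(\Lambda)\ge1$. At the $\sim\Lambda/h$ grid points in a dyadic block, Chebyshev with the second moment $\Lambda$ gives a bad set of measure $\lesssim\Lambda^2/(h t^2)$. Between consecutive grid points I would control the increment $E_{\Lambda'}-E_{\Lambda}$ for $|\Lambda'-\Lambda|\le h$ by splitting $[\Lambda,\Lambda']$ into $h$ unit windows. Each unit window is handled by the same smoothed decomposition, with an $O(\Lambda^{1/2})$ parametrix contribution for good $y$. This gives an interpolation error $\lesssim\Lambda^{1/2}h$.

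Balancing $t\approx\Lambda^{1/2}h$ against the measure bound $\Lambda^2/(ht^2)\approx\Lambda^{5/2}/t^3$ gives $t\approx\Lambda^{5/6}$. Inserting the logarithmic weights needed to make the dyadic sums converge, namely $(\log\Lambda)(\log\log\Lambda)^{1+6\eps}$ in the measure bound, produces exactly $t=\Lambda^{5/6}(\log\Lambda)^{1/6}(\log\log\Lambda)^{1/6+\eps}$. Borel--Cantelli over the dyadic blocks then gives \eqref{E2} on a full-measure set $Z_x$.

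The main obstacle is the interpolation step: bounding the sharp increment over a window of length $h$ by $\Lambda^{1/2}h$ rather than by the trivial Cauchy--Schwarz/local Weyl bound $\Lambda h$. The sharp cutoffs at the window ends produce remainder terms that are again only $L^2$-controlled. So I would first try to apply the smoothed/rough splitting to the window indicator itself. I would bound the smooth part pointwise via the parametrix, and absorb the rough endpoint pieces into the grid Chebyshev estimate, which only costs constants since they have second moment $\lesssim\Lambda$. I would also keep a Rademacher--Menshov maximal estimate as a fallback to control the rough part uniformly inside each window at a logarithmic cost.
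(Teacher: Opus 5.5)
There is a genuine gap, and it is exactly at the step you flag as the main obstacle. Your interpolation bound needs, at a fixed good $y$, the pointwise estimate $\bigl|\sum_{\mu\le\lambda_j<\mu'}\phi_j(x)\phi_j(y)\bigr|\lesssim\Lambda^{1/2}$ for \emph{every} window with $\mu'-\mu\le 1$ in the block. Nothing in your setup provides this. The smoothed part of such a window is $O(\Lambda^{1/2})$, but the rough part is controlled only in $L^2_y$. Pointwise, the best available bound is the local Weyl/Cauchy--Schwarz bound $O(\Lambda)$.

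In fact the estimate you want would by itself imply the conjectured typical rate. With $\int\rho=1$ one has $R_\Lambda(x,y)=\int\rho(s)\,(E_\Lambda-E_{\Lambda+s})(x,y)\,ds+O(1)$. A uniform $O(\Lambda^{1/2})$ bound on sharp unit-window increments then gives $|R_\Lambda(x,y)|\lesssim\int|\rho(s)|(1+|s|)(\Lambda+|s|)^{1/2}\,ds\lesssim\Lambda^{1/2}$. Hence $E_\Lambda(x,y)=O(\Lambda^{1/2})$ outright, with no grid or Borel--Cantelli needed.

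Your proposed repairs do not close the gap.
\begin{itemize}
\item Absorbing the rough endpoint pieces into the grid Chebyshev estimate does not ``only cost constants.'' Each of the $h$ unit-window endpoints per grid interval needs its own Chebyshev event. That is just reverting to unit spacing: $\sim\Lambda$ points of second moment $\sim\Lambda$, so the union bound $\Lambda\cdot\Lambda/t^2$ forces $t\gtrsim\Lambda$ and the balancing is lost. The supremum over the continuum of $\Lambda'$ inside each unit window also remains uncontrolled.
\item The Rademacher--Menshov fallback fails for the same reason. An $L^2$ maximal bound over a window of length $h$ is at least $(h\Lambda)^{1/2}$, and the union over $\Lambda/h$ windows gives measure $\gtrsim\Lambda^2/t^2$ regardless of $h$.
\end{itemize}
Any argument that uses only $L^2_y$ information this way is stuck at $\Lambda^{n/2+o(1)}=\Lambda^{1+o(1)}$, which is what Theorem~\ref{M1} already gives for $n=2$.

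The missing idea is integrability beyond $L^2$, so that Chebyshev over the $\sim L$ unit windows of a dyadic block costs $L^{1/p}$ instead of $L^{1/2}$. The paper gets this as follows.
\begin{itemize}
\item The source is Sogge's spectral cluster bound $\|Q_kf\|_{L^p}\lesssim k^{\delta(p)}\|f\|_{L^2}$.
\item To apply it to the full spectral function, multiply by a cutoff $\chi$ vanishing near $x$. The almost-orthogonality $\|Q_m\chi Q_n\|_{L^2\to L^2}\lesssim_N(1+|m-n|)^{-N}$ (from iterated commutators with $\sqrt\Delta$), together with the identity $\sum_nQ_m\chi Q_n\delta_x=0$, gives $\|Q_m(\chi\tilde u_k)\|_{L^2}\lesssim k^{1/2}(1+|m-k|)^{-N}$. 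Hence $\|\chi E_k(x,\cdot)\|_{L^p}\lesssim k^{1/2+\delta(p)}$.
\item The supremum over $\lambda\in[k,k+1)$ of the remaining partial sums is handled by the Christ--Kiselev lemma applied to $c\mapsto\sum_v c_v\psi_v$ on the cluster, with no logarithmic loss.
\item Summing $p$-th powers over the unit windows gives $\|\sup_{L\le\lambda<2L}|\chi E_\lambda(x,\cdot)|\|_{L^p}\lesssim L^{1/2+\delta(p)+1/p}$, which is $L^{5/6}$ at $p=6$.
\item Chebyshev in $L^6$, Borel--Cantelli in $m$, and a countable family of cutoffs $\chi_l$ finish the proof.
\end{itemize}
Your balancing lands on the same exponent $5/6$, but only conditionally on the unproved interpolation bound.
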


The main idea behind the proof is the existence of a maximal dyadic estimate of the form 
\begin{equation*}
\int_M \sup\limits_{0<\Lambda\le 2^m} |E_{\Lambda}(x,y)|^2\,dy \le C_M m^22^{mn}.
\end{equation*}
Once such an estimate is established, it can be combined with the Chebyshev inequality and the classical Borel-Cantelli Lemma to produce a subset $Z_x\subset M$ of full measure such that for any given $y\in Z_x$ there is a threshold $\Lambda_0$ depending on $y$ such that \eqref{E1} holds for $\Lambda_0\le \Lambda$. 

Surprisingly, for $n\geq 3$ the above maximal dyadic estimate is an immediate consequence of the Rademacher-Menshov inequality. 

The $n=2$ case is less straightforward. The idea is to prove an $L^p$-based maximal dyadic estimate since this can, and in fact does give a better exponent when one applies the Chebyshev inequality. The proof is technically involved and uses the spectral cluster estimates of Sogge \cite{Sogge1993} and the Christ-Kiselev theorem \cite{Christ2001} which are stated precisely in section \ref{prelim}. It is thanks to the latter theorem that a power of logarithm is absent from \eqref{E2} while present at \eqref{E1}. 

\subsection*{AI Disclosure} I got the idea of using maximal estimates for bounding the spectral function from \cite{CD2026}. The main idea in \cite{CD2026} was obtained through extensive interactions with ChatGPT-$5.6$ Sol. In this paper, I used ChatGPT-5.6 Sol to assist with part of the proof of Theorem \ref{M2}. Maybe the most meaningful recommendation is the introduction of the cutoff function in Lemma \ref{aux2} which completely avoids the issue of having to estimate the low-frequency sum directly. \cite{Polterovich2009} seem to have a way of dealing with this by splitting $u_k$ into two pieces and estimating them separately. 

\subsection*{Acknowledgements} I would like to thank Dimitrios Chatzakos who, late in January $2026$, shared with me his passion for the hyperbolic lattice problem. It is that one conversation (and quite obviously also the ones that followed) that led me to the simple and, in my admittedly biased view, beautiful idea behind the proofs of this paper.

\section{Preliminaries}\label{prelim}

\subsection{Surface spectral cluster estimates}

Define 
\[
Q_k := \sum\limits_{k\le \lambda_j<k+1} \phi_j(x)\overline{\phi_j(y)}. 
\]
The following proposition is corolary $5.1.2$ in \cite{Sogge1993},
\begin{proposition}\label{Sogge}
\begin{equation*}
\|Q_k f\|_{L^p(M)} \le C_{M,p}(1+k)^{\delta(p)}\|f\|_{L^2(M)}
\end{equation*}
where 
\[
\delta(p) = \left\{ \begin{aligned}
    &\frac{1}{4} - \frac{1}{2p},&& \text{for } 2\le p\le6 \\
    &\frac{1}{2} - \frac{2}{p}, && \text{for } 6\le p <\infty
\end{aligned} \right.
\]
\end{proposition}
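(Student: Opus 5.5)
This is Sogge's spectral cluster estimate on surfaces, and I would prove it in the standard way: first handle the two endpoint exponents $p=6$ and $p=\infty$, then interpolate. The first step is to replace the sharp projection $Q_k$ by a smoothed operator. Choose $\rho\in\mathcal S(\R)$ with $\rho(0)=1$, $\rho>0$ on $[-1,1]$, and $\hat\rho$ supported in $(-\eps,\eps)$, where $\eps$ is smaller than the injectivity radius of $M$. Set $\chi_k=\rho(\sqrt{\Delta}-k)$. Then $Q_k=\chi_k\circ m_k(\sqrt\Delta)$, where $m_k$ is a bounded multiplier with uniformly bounded $L^2$ operator norm. So it suffices to prove $\|\chi_k f\|_{L^p}\le C(1+k)^{\delta(p)}\|f\|_{L^2}$.

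Next I would write $\chi_k=\frac1{2\pi}\int\hat\rho(t)e^{-itk}e^{it\sqrt\Delta}\,dt$. For $|t|<\eps$, the Hadamard/Lax parametrix for $e^{it\sqrt\Delta}$ and stationary phase in $t$ give a kernel of the form
\[
\chi_k(x,y)=k^{1/2}\sum_\pm a_\pm(k;x,y)e^{\pm ik\, d_g(x,y)}+O(k^{-N}),
\]
where the amplitude $a_\pm$ is supported in $d_g(x,y)\le\eps$ and satisfies $|\partial^\alpha a_\pm|\lesssim (k\,d_g(x,y))^{-1/2}$ when $d_g\ge k^{-1}$. For $d_g\le k^{-1}$ the kernel is simply bounded by $O(k)$. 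The endpoint $p=\infty$ then follows directly from Cauchy–Schwarz, $\|\chi_k f\|_\infty\le\sup_x\|\chi_k(x,\cdot)\|_{L^2}\|f\|_2$. Indeed, by $L^2$ orthogonality, $\|\chi_k(x,\cdot)\|_2^2=\sum_j\rho(\lambda_j-k)^2|\phi_j(x)|^2\lesssim k$, using the local Weyl law. This gives the exponent $1/2=\delta(\infty)$.

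The main obstacle is the endpoint $p=6$, where I need the exponent $1/6$. I would use a $TT^*$ argument. Since $\chi_k\chi_k^*=\rho^2(\sqrt\Delta-k)$ has a kernel of the same oscillatory form, it suffices to show $\|\chi_k\chi_k^*\|_{L^{6/5}\to L^6}\lesssim k^{1/3}$. Decompose the kernel dyadically in $d_g(x,y)\sim 2^{j}k^{-1}$. On each dyadic piece, after rescaling, the operator is an oscillatory integral operator with phase $k\,d_g(x,y)$. The Riemannian distance function satisfies the Carleson–Sjölin condition, so the Hörmander/Carleson–Sjölin $L^2$-restriction-type theorem in dimension two applies. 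Summing the dyadic pieces, together with the trivial piece $d_g\lesssim k^{-1}$, yields the $k^{1/3}$ bound. This summation is exactly where $p=6$ is the critical exponent: for $p>6$ the pieces gain geometrically, while at $p=6$ one has to use the fact that the Carleson–Sjölin estimate holds at the endpoint for $n=2$.

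Finally, Riesz–Thorin interpolation finishes the proof. Interpolating between the trivial bound at $p=2$ and the bound $k^{1/6}$ at $p=6$ gives $\frac14-\frac1{2p}$ on $[2,6]$. Interpolating between $k^{1/6}$ at $p=6$ and $k^{1/2}$ at $p=\infty$ gives $\frac12-\frac2p$ on $[6,\infty)$. Since the statement is quoted from \cite{Sogge1993}, in the paper I would simply cite it, but the plan above is how the proof goes.
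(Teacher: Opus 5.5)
The paper gives no proof of this proposition; it is quoted from \cite{Sogge1993}. Your smoothing reduction, the $p=\infty$ bound via Cauchy--Schwarz and the local Weyl law, and the final Riesz--Thorin step are fine. The $p=6$ step, however, is the only nontrivial one, and it does not work as written.

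Split $\rho^2(\sqrt\Delta-k)$ into pieces $S_j$ localized to $|t|\sim 2^j/k$ in the wave-group representation (equivalently $d_g\sim 2^j/k$, as you propose), with $1\le 2^j\lesssim \eps k$. Then $\|S_j\|_{L^1\to L^\infty}\lesssim k\,2^{-j/2}$ and $\|S_j\|_{L^2\to L^2}\lesssim 2^j/k$. These interpolate to $\|S_j\|_{L^{6/5}\to L^6}\lesssim k^{1/3}$ uniformly in $j$. This bound is sharp already in the flat model: there $S_j$ is a multiplier of size $\approx 2^j/k$ on a shell of width $\approx k2^{-j}$ around radius $k$, and a Knapp example attains $k^{1/3}$. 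So there are $\approx\log k$ pieces of norm $\approx k^{1/3}$ each. Any argument that bounds the pieces separately and adds them gives only $k^{1/3}\log k$, i.e.\ $\|Q_k\|_{L^2\to L^6}\lesssim k^{1/6}(\log k)^{1/2}$. Invoking Carleson--Sj\"olin does not change this. Since the piecewise bound is sharp, the logarithm can only be removed by exploiting cancellation between different $S_j$. A piece-by-piece use of an $L^2(\R)\to L^6(\R^2)$ oscillatory integral estimate does not provide that cancellation, and it does not even directly give $L^{6/5}\to L^6$ bounds for the $S_j$. So ``the estimate holds at the endpoint for $n=2$'' is not a mechanism for removing the logarithm.

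The standard way around this (Sogge's) is to avoid the diagonal altogether.
\begin{itemize}
\item Take $\hat\rho$ supported in $[\eps/2,\eps]$, a translate of a small bump. Then $|\rho|$ is still bounded below on $[-1,1]$, so $Q_k=\rho(\sqrt\Delta-k)\,m_k(\sqrt\Delta)$ with $m_k$ uniformly bounded.
\item Only $e^{it\sqrt\Delta}$ with $|t|\approx\eps$ now enters. The parametrix and stationary phase give the kernel $k^{1/2}\sum_\pm a_\pm(k;x,y)e^{\pm ikd_g(x,y)}+O(k^{-N})$, with $a_\pm$ smooth, uniformly bounded with all derivatives, and supported where $d_g(x,y)$ is comparable to $\eps$.
\item Freeze one local coordinate of $y$. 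The remaining operator has a Carleson--Sj\"olin phase and maps $L^2(\R)\to L^6(\R^2)$ with norm $O(k^{-1/3})$.
\item Minkowski's inequality in the frozen variable and Cauchy--Schwarz then give $\|\rho(\sqrt\Delta-k)f\|_{L^6}\lesssim k^{1/2-1/3}\|f\|_{L^2}=k^{1/6}\|f\|_{L^2}$ directly, with no $TT^*$ and no dyadic sum.
\end{itemize}
If you want to keep $\hat\rho$ supported near $0$, you have two options. You can apply the rescaled oscillatory integral bound to the dyadic pieces of $\chi_k$ itself (not of $\chi_k\chi_k^*$); their $L^2\to L^6$ norms are $O\bigl(k^{1/6}(2^j/k)^{1/2}\bigr)$ and hence summable. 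Or you can replace the triangle inequality by a genuine endpoint interpolation argument of Stein--Tomas or Keel--Tao type.

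A smaller slip: for $k^{-1}\le d_g\le\eps$ the kernel of $\chi_k$ has size $k^{1/2}d_g^{-1/2}$. With your normalization this means $|a_\pm|\lesssim d_g^{-1/2}$, with derivatives costing further powers of $d_g^{-1}$, not $(kd_g)^{-1/2}$. Your version does not match the $O(k)$ bound at $d_g\approx k^{-1}$.
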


\subsection{Finite maximal inequalities} 

\begin{theorem}[A finite Rademacher-Menshov inequality]\label{RME}
Let $u_1,\dots,u_N$ be pairwise orthogonal elements of $L^2(M,\mu)$. Then 
\begin{equation*}
\int_M \max\limits_{1\le k\le N} \left|\sum\limits_{j=1}^ku_j(z) \right|^2\,d\mu(z) \le C(\log(N))^2\sum\limits_{j=1}^N\|u_j\|_{L^2(M)}^2
\end{equation*}
where $C$ is an absolute constant. 
\end{theorem}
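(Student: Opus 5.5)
The plan is the classical dyadic (binary tree) argument of Rademacher and Menshov. First reduce to $N=2^m$ by padding with zeros; the factor $(\log N)^2$ only changes by an absolute constant, and for $N=1$ the bound is trivial. For each level $\ell=0,1,\dots,m$ consider the dyadic blocks
\[
I_{\ell,r}=\{(r-1)2^{\ell}+1,\dots,r2^{\ell}\},\qquad 1\le r\le 2^{m-\ell},
\]
and the block sums $S_{\ell,r}=\sum_{j\in I_{\ell,r}}u_j$. Every initial segment $\{1,\dots,k\}$ with $k\le N$ is a disjoint union of at most one block from each level: take the blocks given by the binary expansion of $k$. Hence for every $z$
\[
\Big|\sum_{j=1}^k u_j(z)\Big| \le \sum_{\ell=0}^{m} \max_{r}|S_{\ell,r}(z)|.
\]

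Next apply Cauchy--Schwarz in $\ell$ (there are $m+1$ terms), and bound each maximum by the full sum of squares:
\[
\max_{k}\Big|\sum_{j=1}^k u_j(z)\Big|^2 \le (m+1)\sum_{\ell=0}^{m}\max_r|S_{\ell,r}(z)|^2 \le (m+1)\sum_{\ell=0}^m\sum_{r}|S_{\ell,r}(z)|^2 .
\]
Integrate over $M$ and use orthogonality. Since $u_j$ are pairwise orthogonal, $\|S_{\ell,r}\|_{L^2}^2=\sum_{j\in I_{\ell,r}}\|u_j\|^2$. The blocks at a fixed level partition $\{1,\dots,N\}$, so each level contributes exactly $\sum_{j=1}^N\|u_j\|^2$. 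This gives
\[
\int_M\max_k\Big|\sum_{j=1}^k u_j\Big|^2\,d\mu\le (m+1)^2\sum_{j=1}^N\|u_j\|_{L^2}^2 ,
\]
and $(m+1)^2\le C(\log N)^2$ for $N\ge 2$.

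There is no serious obstacle. The only points needing care are the combinatorial claim that an initial segment decomposes into at most one dyadic block per level, which follows from the binary expansion of $k$, and measurability of the maximum, which is clear because it is a finite maximum. The case $N=1$, where $\log N=0$, has to be read with the convention that the statement is for $N\ge2$ (or with $\log(N+1)$), since for $N=1$ the left side is $\|u_1\|^2$.
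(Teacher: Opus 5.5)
Your proof is correct. The binary-expansion decomposition of $\{1,\dots,k\}$ into at most one dyadic block per level, Cauchy--Schwarz over the $m+1$ levels, and Pythagoras level by level give the constant $(\lceil\log_2 N\rceil+1)^2\le C(\log N)^2$ for $N\ge 2$. You are also right that the statement as literally written fails for $N=1$; this is harmless, since the paper only applies it with $N$ large.

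The paper does not prove the inequality itself; it only cites \cite{Meaney2007}, which presents Salem's Fourier-analytic proof rather than the dyadic one. There one sets $F(z,t)=\sum_{j=1}^N u_j(z)e^{ijt}$, so that $\int_M|F(z,t)|^2\,d\mu=\sum_j\|u_j\|^2$ for every $t$. One then writes $\sum_{j\le k}u_j(z)=\frac{1}{2\pi}\int_{-\pi}^{\pi}F(z,t)\overline{D_k(t)}\,dt$ with $D_k(t)=\sum_{l=1}^k e^{ilt}$, and applies Cauchy--Schwarz with weight $|D_k|$. One logarithm is the Lebesgue constant $\frac{1}{2\pi}\int|D_k|\le 1+\log N$. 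The other comes from integrating the $k$-independent envelope $|D_k(t)|\le\min(N,\pi/|t|)$. The result is the bound $(1+\log N)^2\sum_j\|u_j\|^2$, which also covers $N=1$.

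Your route is purely combinatorial and self-contained. Salem's replaces the dyadic tree by a single auxiliary circle variable, gives a clean explicit constant, and identifies the two logarithms with the growth of the Dirichlet kernel.
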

This is a well-known inequality. A modern reference is \cite{Meaney2007}. The next theorem of Christ and Kiselev \cite{Christ2001} should be thought of as an analog of the Rademacher-Menshov inequality for $p>2$. The stronger intitial boundedness assumption allows one to eliminate the logarithm which is present in the Rademacher-Menshov inequality \ref{RME}. 
\begin{theorem}\label{CKE}
Let $2<p<\infty$. Assuming that 
\begin{equation*}
T:\ell^2(\{1,...,N\}) \to L^p(M,\mu)
\end{equation*}
is a bounded linear operator, then for all $c=(c_1,...,c_N)$ 
\begin{equation*}
\|\max\limits_{0\le k\le N}|T(c_1,\dots,c_k,0,\dots,0)|\|_p \le C_p\|T\|_{\ell^2\to L^p}\|c\|_{\ell^2}
\end{equation*}
where $C_p = (1-2^{-(1/2-1/p)})^{-1}$. 
\end{theorem}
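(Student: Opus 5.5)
We would prove Theorem~\ref{CKE} by the dyadic decomposition argument of Christ and Kiselev. By homogeneity we may assume $\|c\|_{\ell^2}=1$, and we may drop all indices with $c_j=0$, since they do not change the partial sums. The case $k=0$ contributes zero.

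\textbf{Step 1: transfer to a continuum.} We first remove the atomic nature of $\{1,\dots,N\}$, because a single index carrying a large share of $\|c\|^2$ would spoil a naive dyadic splitting. Partition $[0,1]$ into consecutive intervals $J_1,\dots,J_N$ with $|J_j|=|c_j|^2$, and put $F(k)=\sum_{j\le k}|c_j|^2$. Define
\[
S f := T\Bigl(\bigl(c_j|c_j|^{-2}\textstyle\int_{J_j} f\bigr)_{j=1}^N\Bigr),\qquad f\in L^2([0,1]).
\]
By Cauchy--Schwarz each coordinate has modulus at most $\|f\|_{L^2(J_j)}$, so $\|S\|_{L^2\to L^p}\le \|T\|_{\ell^2\to L^p}$. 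Moreover $T(c_1,\dots,c_k,0,\dots,0)=S(\mathbf 1_{[0,F(k)]})$. It therefore suffices to bound $\|\sup_{t\in[0,1]}|S\mathbf 1_{[0,t]}|\|_p$, or just the supremum over the finitely many $t=F(k)$.

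\textbf{Step 2: dyadic decomposition of initial segments.} For $m\ge 0$ let $I_{m,i}=((i-1)2^{-m},i2^{-m}]$ for $1\le i\le 2^m$. Writing $t$ in binary, $(0,t]$ is a disjoint union of at most one interval $I_{m,i}$ from each level $m$. Since $S$ is bounded on $L^2$, the series converges in $L^p$ and hence a.e. along a subsequence. This gives the pointwise bound
\[
\sup_t|S\mathbf 1_{[0,t]}|\le \sum_{m\ge 0}\max_{1\le i\le 2^m}|S\mathbf 1_{I_{m,i}}|.
\]
Only the finitely many values $t=F(k)$ are needed, so measurability is not an issue.

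\textbf{Step 3: level-wise estimate and summation.} For a fixed level we dominate the maximum by an $\ell^p$ sum:
\[
\Bigl\|\max_i|S\mathbf 1_{I_{m,i}}|\Bigr\|_p\le\Bigl(\sum_{i=1}^{2^m}\|S\mathbf 1_{I_{m,i}}\|_p^p\Bigr)^{1/p}\le \|T\|\bigl(2^m\cdot 2^{-mp/2}\bigr)^{1/p}=\|T\|\,2^{-m(1/2-1/p)}.
\]
Minkowski's inequality over $m$ then gives the geometric series $\sum_{m\ge0}2^{-m(1/2-1/p)}=(1-2^{-(1/2-1/p)})^{-1}=C_p$. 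This proves the claim with exactly the stated constant. Here $p>2$ is used precisely to make the series converge; at $p=2$ each level costs $O(1)$, which is why Theorem~\ref{RME} carries a $\log N$ loss.

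The main obstacle is Step 1, namely handling atoms so that each dyadic piece has $L^2$ mass exactly $2^{-m}$. Without the continuum extension, a dyadic block of indices could carry mass much larger than $2^{-m}$ because of a single heavy coefficient. The operator $S$ resolves this cleanly, since any initial segment of indices corresponds to an initial interval $[0,F(k)]$. The remaining steps are routine.
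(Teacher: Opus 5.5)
Your proof is correct and complete, and it yields exactly the stated constant $C_p=(1-2^{-(1/2-1/p)})^{-1}$.

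The paper gives no proof of this statement; it is quoted from \cite{Christ2001}. So your argument supplies what the paper outsources, and it is essentially the Christ--Kiselev argument. You decompose initial segments dyadically with respect to the $\ell^2$ mass of the coefficients. You then bound each level's maximum by an $\ell^p$ sum of $2^m$ pieces, each of norm at most $\|T\|\,2^{-m/2}$. Finally you sum the resulting geometric series, which converges precisely because $p>2$.

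Your Step~1 is the right device for the atomic space $\ell^2(\{1,\dots,N\})$. The operator $S$ on $L^2([0,1])$, built from intervals with $|J_j|=|c_j|^2$, plays the role of the continuous mass distribution function that Christ and Kiselev use for functions on $\mathbb{R}$. Both checks, $\|S\|\le\|T\|$ and $S\mathbf 1_{[0,F(k)]}=T(c_1,\dots,c_k,0,\dots,0)$, are right. Restricting to the finitely many $t=F(k)$ correctly avoids measurability issues.

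Two cosmetic points in Step~2:
\begin{itemize}
\item $S$ is bounded from $L^2$ to $L^p$, not on $L^2$.
\item For non-dyadic $t$, the union of the chosen dyadic intervals is $(0,t)$ rather than $(0,t]$. This is a null-set discrepancy and is irrelevant in $L^2$.
\end{itemize}
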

\begin{corollary}\label{CKC}
Let $2<p<\infty$ and $k\geq 2$. Let $\psi_1,...,\psi_N$ be a set of orthonormal eigenfunctions with frequencies in $[k,k+1)$. Then 
\begin{equation*}
\left\|\max\limits_{0\le r\le N} \left|\sum\limits_{v =1}^r c_v\psi_v \right|\right\|_p \le C_{M,p}k^{\delta(p)} \left(\sum\limits_{v=1}^N |c_v|^2\right)^{1/2}. 
\end{equation*}
\begin{proof}
Define 
\[
T_k(c) = \sum\limits_{v=1}^N c_v\psi_v.
\]
The image of this operator is in the range $\mathrm{ran} Q_k$ and orthonormality gives 
\[
\|T_kc\|_2^2 = \sum\limits_{v=1}^N|c_v|^2. 
\]
Apply \ref{Sogge} to get 
\[
\|T_kc\|_p \le C_{M,p}k^{\delta(p)}\|c\|_{\ell^2}
\]
and thus $\|T_k\|_{\ell^2\to L^p} \le C_{M,p}k^{\delta(p)}$ which immediately implies the result. 
\end{proof}
\end{corollary}

\section{Proof of Theorem \ref{M1}}

This is deceptively easy. The local Weyl law gives 
\[
\sum\limits_{0<\lambda_j<L} |\phi_j(x)|^2 \le C_M(1+L)^n
\]
with the constant uniform in $x\in M$. Let 
\[
J_m = \{j: 0<\lambda_j<L\}.
\]
For fixed $x$ the functions $u_j(y) = \phi_j(x)\overline{\phi_j(y)}$ for $j\in J_m$ are pairwise orthogonal in $L^2(M,\,dy)$ and $\|u_j\|_{L^2(y)} = |\phi_j(x)|^2$. Applying the Rademacher-Menshov inequality \ref{RME} to this finite set of functions and using the local Weyl law we obtain 
\begin{equation}\label{mainest1}
\int_M \sup\limits_{0<\Lambda\le 2^m} |E_{\Lambda}(x,y)|^2\,dy \le C_M m^22^{mn}. 
\end{equation}
Now define $b_m = m^{1/2}(\log m )^{1/2+\eps}$ and
\[
\mathcal E_m = \{y: \sup\limits_{0<\Lambda\le 2^m} |E_{\Lambda}(x,y)| > 2^{mn/2}mb_m\}.
\]
The estimate \eqref{mainest1} and the Chebyshev inequality give
\[
\mu(\mathcal E_m) \le C_Mb_m^{-2} = \frac{C_M}{m(\log m)^{1+2\eps}}. 
\]
This in particular implies that 
\[
\sum\limits_{m\geq 0} \mu(\mathcal E_m) <\infty,
\]
so the first Borel-Cantelli Lemma implies that 
\[
\mu(\limsup\limits_{m\to\infty} \mathcal E_m) = 0.
\]
Define $Z_x = M\backslash \limsup\limits_{m\to\infty} \mathcal E_m$. Then for any $y\in Z_x$ there exists some $m_0$ such that for all $m\geq m_0$ and $\Lambda\le 2^m$
\[
|E_{\Lambda}(x,y)| \le 2^{mn/2}mb_m. 
\]
This immediately gives 
\[
E_{\Lambda}(x,y) = O_{M,x,y,\eps}(\Lambda^{n/2}(\log\Lambda)^{3/2}(\log\log\Lambda)^{1/2+\eps}).
\]

\section{Proof of Theorem \ref{M2}}

\subsection{Auxiliary results}

\begin{lemma}\label{aux}
If $\chi\in C^{\infty}(M)$ then for all $N\geq 0$ 
\[
\|Q_m \chi Q_n\|_{L^2(M)\to L^2(M)} \le C_{N,\chi} (1+|m-n|)^{-N}
\]
for $m,n\geq 0$.
\end{lemma}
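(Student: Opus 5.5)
Throughout, $P=\sqrt{\Delta}$ and $Q_m$ is the spectral projector onto the eigenvalues $\lambda_j\in[m,m+1)$. The plan is a commutator argument. Multiplication by $\chi$ is a zeroth-order pseudodifferential operator. Its iterated commutators with $P$ stay in the zeroth-order class: $\ad_P(\chi)=[P,\chi]$ has order $0$, because the principal symbol of $P$ is $|\xi|_g$ and the commutator drops one order from $1+0$. By induction, $\ad_P^N(\chi)\in\Psi^0(M)$ for every $N$, and is therefore bounded on $L^2(M)$ with norm $C_{N,\chi}$.

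For $u,v\in L^2$, set $A=\chi$ and compare $Q_m\,\ad_P^N(\chi)\,Q_n$ with $Q_m\chi Q_n$. On the ranges, $P$ acts on $\mathrm{ran}\,Q_n$ with spectrum in $[n,n+1)$, and similarly on $\mathrm{ran}\,Q_m$. Pass to the eigenbasis and write $a_{jk}=\langle \chi\phi_k,\phi_j\rangle$ with $\lambda_j\in[m,m+1)$ and $\lambda_k\in[n,n+1)$. Then
\[
\langle \ad_P^N(\chi)\phi_k,\phi_j\rangle=(\lambda_j-\lambda_k)^N a_{jk},
\]
so $Q_m\,\ad_P^N(\chi)\,Q_n$ has matrix entries $(\lambda_j-\lambda_k)^N a_{jk}$.

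The key step is to divide by $(\lambda_j-\lambda_k)^N$ when $|m-n|\ge 2$. There $|\lambda_j-\lambda_k|\ge |m-n|-1\gtrsim |m-n|$, but the factor is a matrix Schur multiplier rather than a scalar, so the division is not immediate. To handle it cleanly, use functional calculus instead. Let $D$ be the operator with $D=(P-m-\tfrac12)$ on the left factor and $(P-n-\tfrac12)$ on the right. Write
\[
(\lambda_j-\lambda_k)=(m-n)+\bigl(\lambda_j-m-\tfrac12\bigr)-\bigl(\lambda_k-n-\tfrac12\bigr),
\]
where the last two terms are bounded by $\tfrac12$ in absolute value. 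Then
\[
(m-n)\,Q_m\chi Q_n = Q_m[P,\chi]Q_n - R_mQ_m\chi Q_n + Q_m\chi Q_nR_n,
\]
where $R_m=(P-m-\tfrac12)Q_m$ has norm at most $\tfrac12$. This gives
\[
|m-n|\,\|Q_m\chi Q_n\|\le \|[P,\chi]\|+\sup\|Q_{m}\chi Q_n\|.
\]
Iterate this identity with $\chi$ replaced by $\ad_P^k(\chi)$, which lies in the same class. By induction on $N$ we obtain
\[
\|Q_m B Q_n\|\le C_{N,B}(1+|m-n|)^{-N}
\]
for every $B$ in the finite family $\{\ad_P^k\chi\}_{k\le N}$. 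The induction step uses the identity with $B$ in place of $\chi$: the error terms carry the same operator $B$ but with coefficient $\tfrac12+\tfrac12=1$. To absorb them, rearrange as
\[
\bigl(|m-n|-1\bigr)\,\|Q_mBQ_n\|\le \|Q_m[P,B]Q_n\|.
\]
For $|m-n|\ge 2$ this gives the gain of one power. For $|m-n|\le 1$ the trivial bound $\|Q_mBQ_n\|\le\|B\|$ suffices.

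The main obstacle is justifying that $\ad_P^N(\chi)$ is $L^2$-bounded with constants independent of $m$ and $n$. This is standard pseudodifferential calculus: $P\in\Psi^1$ is a classical operator by Seeley's theorem, $\chi\in\Psi^0$, and commutators lower the order by one. Taking this as known, the rest is the elementary absorption argument above.
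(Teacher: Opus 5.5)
Your proof is correct and follows essentially the same route as the paper. The iterated commutators $\ad_P^N(\chi)$ are zeroth order and hence $L^2$-bounded, and each commutator gains a factor $(|m-n|-1)^{-1}$ because $\mathcal L(T)=A_mT-TA_n$ satisfies $\|\mathcal L(T)\|\ge(|m-n|-1)\|T\|$.

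The only difference is how that lower bound is obtained. The paper writes down the inverse $\int_0^\infty e^{-tA_m}Se^{tA_n}\,dt$ and handles $n\ge m+2$ by taking adjoints. Your centering $PQ_m=(m+\tfrac12)Q_m+R_m$ with $\|R_m\|\le\tfrac12$ gives the bound directly, for both signs of $m-n$ at once. Your first displayed inequality, with the stray ``$\sup$'', is garbled, but the rearranged form $(|m-n|-1)\|Q_mBQ_n\|\le\|Q_m[P,B]Q_n\|$ is right.
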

%


%
\begin{proof}
Let $A:= \sqrt{\Delta}$. This is a classical pseudodifferential operator of order one. Symbolic calculus shows that $B_N = \ad_A^N(\chi)$ where $\ad_A(B) = [A,B]$ is of order zero for every $N$ and hence bounded in $L^2(M)$. Assume first that $m\ge n+2$. Let $A_m = A|_{\mathrm{ran}Q_m}$ and $A_n = A|_{\mathrm{ran}Q_n}$. These are self-adjoint operators on finite dimensional spaces with spectra contained in $[m,m+1)$ and $[n,n+1)$ respectively. On operators $T:\mathrm{ran}Q_n\to \mathrm{ran}Q_m$ define 
\[
\mathcal L(T) = A_mT-TA_n.
\]
Let $g = m-n-1>0$. For any such operator $S$ the integral 
\[
\mathcal R(S) = \int_0^{\infty} e^{-tA_m}Se^{tA_n}\,dt 
\]
converges in operator norm since $\|e^{-tA_m}Se^{tA_n}\| \le e^{-tg}\|S\|$ and $\|\mathcal R(S)\| \le g^{-1}\|S\|$. It is easy to verify that $\mathcal R$ is the inverse of $\mathcal L$ and therefore $\|\mathcal L^{-1}\|\le g^{-1}$. 

Since $Q_m,Q_n$ commute with $A$, $\mathcal L(Q_m\chi Q_n) = Q_m[A,\chi]Q_n$ and inductively $\mathcal L^N(Q_m\chi Q_n) = Q_mB_NQ_n$. This implies that 
\[
\|Q_m\chi Q_n\|_{L^2(M)\to L^2(M)} \le g^{-N}\|Q_mB_NQ_n\|_{L^2(M)\to L^2(M)}\le C_{N,\chi}(m-n-1)^{-N}.
\]
For $n\geq m+2$ repeat the above argument for $(Q_m \chi Q_n)^{\star} = Q_n \bar\chi Q_m$. If $|m-n|\le 1$ then the result follows immediately from 
\[
\|Q_m\chi Q_n\|_{L^2(M)\to L^2(M)}\le \|\chi\|_{\infty}.
\]
\end{proof}

We fix $\chi\in C^{\infty}(M)$ vanishing in a neighborhood of $x$. Let $u_{\Lambda}(y) = E_{\Lambda}(x,y) = E_{\Lambda}\delta_x$ and $\tilde u_{\Lambda}(y) = u_{\Lambda}(y) + \mathrm{vol}(M)^{-1} =: \mathbf{1}_{[0,\Lambda)}\sqrt{\Delta} \delta_x$. The next Lemma is stated for $\tilde u_{\Lambda}(y)$ so that the crucial identity \eqref{didentity} holds. 
\begin{lemma}\label{aux2}
For integers $k\ge 2$, $m\ge 0$ and $N\geq 1$,
\[
\|Q_m\chi \tilde u_k\|_{L^2(M)} \le C_{M,\chi,x}(1+k)^{1/2}(1+|m-k|)^{-N}.
\]
\end{lemma}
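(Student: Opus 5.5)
The plan is to decompose $\tilde u_k$ into spectral clusters and apply Lemma \ref{aux} cluster by cluster. The crucial identity I will use is $\chi\delta_x=0$, which holds because $\chi$ vanishes near $x$. Write $A=\sqrt\Delta$ and note that
\[
\tilde u_k=\mathbf 1_{[0,k)}(A)\delta_x=\sum_{n=0}^{k-1}Q_n\delta_x ,
\]
where the constant eigenfunction is absorbed into $Q_0$ (this is why the lemma is stated for $\tilde u_k$ rather than $u_k$). Since $\delta_x=\sum_{n\ge0}Q_n\delta_x$ with convergence in $H^{-s}(M)$ for $s>1$, we also get
\begin{equation}\label{didentity}
Q_m\chi\tilde u_k=Q_m\chi\delta_x-\sum_{n\ge k}Q_m\chi Q_n\delta_x=-\sum_{n\ge k}Q_m\chi Q_n\delta_x .
\end{equation}
To justify this, observe that $Q_m\chi$ has a smooth kernel, so it is continuous from $H^{-s}$ to $L^2$ and may be applied term by term. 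Moreover $Q_m\chi\delta_x(y)=\sum_{m\le\lambda_j<m+1}\phi_j(y)\chi(x)\overline{\phi_j(x)}=0$ because $\chi(x)=0$. I expect this justification to be the only delicate point; the rest is bookkeeping.

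The second ingredient is the cluster bound
\[
\|Q_n\delta_x\|_{L^2}^2=\sum_{n\le\lambda_j<n+1}|\phi_j(x)|^2\le C_M(1+n),
\]
which is the local Weyl law with remainder $O(\Lambda^{n-1})$ in dimension two, uniform in $x$. Combining this with Lemma \ref{aux} (applied with exponent $N'=N+2$) gives
\[
\|Q_m\chi Q_n\delta_x\|_2\le C_{N,\chi}(1+n)^{1/2}(1+|m-n|)^{-N-2}.
\]

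I then split according to the position of $m$ relative to $k$.

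\emph{Case $m\ge k$.} Use the finite sum $\tilde u_k=\sum_{n<k}Q_n\delta_x$ and the triangle inequality. For $n<k\le m$ we have $(1+n)^{1/2}\le(1+k)^{1/2}$ and $|m-n|=(m-k)+(k-n)$. Hence
\[
\sum_{n<k}(1+n)^{1/2}(1+m-n)^{-N-2}\le(1+k)^{1/2}(1+m-k)^{-N}\sum_{j\ge0}(1+j)^{-2},
\]
which is the desired bound.

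\emph{Case $m<k$.} Use \eqref{didentity} instead, so that only $n\ge k>m$ appear. Here we bound
\[
(1+n)^{1/2}\le(1+k)^{1/2}(1+n-k)^{1/2}
\]
and use $n-m=(n-k)+(k-m)$. The sum becomes
\[
\sum_{n\ge k}(1+n)^{1/2}(1+n-m)^{-N-2}\le(1+k)^{1/2}(1+k-m)^{-N}\sum_{j\ge0}(1+j)^{1/2-2},
\]
and the last series converges, so again $\|Q_m\chi\tilde u_k\|_2\le C(1+k)^{1/2}(1+|m-k|)^{-N}$.

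The constants depend only on $M$, $\chi$ and $N$, and on $x$ only through the choice of $\chi$. Both cases therefore yield the bound in Lemma \ref{aux2}.
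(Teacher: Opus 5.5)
Your proof is correct and follows essentially the same route as the paper. For $m\ge k$ you expand $\tilde u_k=\sum_{n<k}Q_n\delta_x$ and combine Lemma~\ref{aux} with the local Weyl cluster bound $\|Q_n\delta_x\|_2\le C(1+n)^{1/2}$; for $m<k$ you use $\sum_{n\ge 0}Q_m\chi Q_n\delta_x=0$ to pass to the tail $n\ge k$. Your justification of that identity (continuity of $Q_m\chi$ from $H^{-s}$ to $L^2$ together with $\chi(x)=0$), which the paper only asserts, and your exponent bookkeeping via $(1+a+b)^{N+2}\ge(1+a)^N(1+b)^2$ are slightly more careful than the paper's.
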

\begin{proof}
If $m\ge k$ then 
\begin{equation*}
\begin{split}
\|Q_m\chi \tilde u_k\|_{L^2(M)} &\le \sum\limits_{n=0}^{k-1}\|Q_m\chi Q_n\|_{L^2(M)\to L^2(M)} \|Q_n\delta_x\|_{L^2(M)}\\
&\le C_{N_1,\chi} \sum\limits_{n=0}^{k-1}(1+|m-n|)^{-N_1}(1+n)^{1/2}\\
&\le C_{N_1,\chi}(1+|m-k|)^{1-N_1}(1+k)^{1/2},
\end{split}
\end{equation*}
where the inequality in the second line was obtained by applying Lemma \ref{aux} and the local Weyl law. To conclude, choose $N_1 = 1+N$.

For $m<k$ use the identity 
\begin{equation}\label{didentity}
\sum\limits_{n=0}^{\infty} Q_m\chi Q_n \delta_x = 0
\end{equation}
in $L^2(M)$ to write 
\[
Q_m(\chi \tilde u_k) = -  \sum\limits_{n=k}^{\infty} Q_m\chi Q_n \delta_x.
\]
Thus, using Lemma \ref{aux} and the local Weyl law we have 
\begin{equation*}
\begin{split}
\|Q_m(\chi \tilde u_k)\|_{L^2(M)} &\le \sum\limits_{n=k}^{\infty}\|Q_m\chi Q_n\|_{L^2(M)\to L^2(M)} \|Q_n\delta_x\|_{L^2(M)}\\
&\le C_{N_2,\chi} \sum\limits_{n=k}^{\infty} (1+n-m)^{-N_2}(1+n)^{1/2}\\
&\le C_{N_2,\chi} (1+k)^{1/2}(1+|m-k|)^{3/2-N_2}.
\end{split}
\end{equation*}
To conclude choose $N_2 = 3/2+ N$. 
\end{proof}
\begin{proposition}\label{aux3}
It holds that
\[
\|\chi u_k\|_{L^p(M)} \le C_{M,p,\chi,x}k^{1/2+\delta(p)}. 
\]
\end{proposition}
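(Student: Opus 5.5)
Here $n=2$ and $k\ge 2$ is an integer. The plan is to decompose $\chi u_k$ spectrally into the unit-width clusters $Q_m$ and bound each piece in $L^p$ by Proposition~\ref{Sogge}. The input for the $L^2$ norms of the pieces is Lemma~\ref{aux2}.

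First I replace $u_k$ by $\tilde u_k$. Since $u_k = \tilde u_k - \mathrm{vol}(M)^{-1}$, we have
\[
\|\chi u_k\|_{L^p}\le \|\chi\tilde u_k\|_{L^p} + C_{\chi}.
\]
The constant is harmless against $k^{1/2+\delta(p)}$, so it suffices to bound $\chi\tilde u_k$.

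The function $\chi\tilde u_k$ is smooth, because $\tilde u_k$ is a finite sum of eigenfunctions. Hence $\chi\tilde u_k=\sum_{m\ge 0}Q_m(\chi\tilde u_k)$, with the series converging in $L^2$ and, by rapid decay, also in $L^p$. Because $Q_m$ is a projection, $Q_m(\chi\tilde u_k)=Q_m\big(Q_m(\chi\tilde u_k)\big)$. Proposition~\ref{Sogge} then gives
\[
\|Q_m(\chi\tilde u_k)\|_{L^p}\le C_{M,p}(1+m)^{\delta(p)}\|Q_m(\chi\tilde u_k)\|_{L^2}.
\]
Lemma~\ref{aux2} bounds the last factor by $C(1+k)^{1/2}(1+|m-k|)^{-N}$. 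By the triangle inequality,
\[
\|\chi\tilde u_k\|_{L^p}\le C(1+k)^{1/2}\sum_{m\ge0}(1+m)^{\delta(p)}(1+|m-k|)^{-N}.
\]

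To evaluate the sum, note $\delta(p)\le 1/2$ and use $(1+m)^{\delta(p)}\le (1+k)^{\delta(p)}(1+|m-k|)^{\delta(p)}$. Choosing $N\ge 3$, the sum $\sum_m (1+|m-k|)^{\delta(p)-N}$ is bounded uniformly in $k$. This yields $\|\chi\tilde u_k\|_{L^p}\le Ck^{1/2+\delta(p)}$.

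There is no serious obstacle, since the off-diagonal decay has already been secured in Lemma~\ref{aux2}. The only points needing care are the following.
\begin{itemize}
\item The series $\sum_m Q_m(\chi\tilde u_k)$ must converge in $L^p$ so that the triangle inequality is legitimate. This holds because the sum of the $L^p$ norms converges absolutely, so the series converges in $L^p$, and its limit agrees with the $L^2$ limit $\chi\tilde u_k$.
\item The factor $(1+m)^{\delta(p)}$ must be absorbed for $m$ much larger than $k$, which the rapid decay in $|m-k|$ handles.
\item The case $p=2$ (where $\delta=0$) needs no separate treatment.
\end{itemize}
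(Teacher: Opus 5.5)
Your proposal is correct and follows the paper's proof step for step: pass to $\tilde u_k$ at the cost of a bounded constant, decompose $\chi\tilde u_k=\sum_m Q_m(\chi\tilde u_k)$, bound each piece using Proposition~\ref{Sogge} and Lemma~\ref{aux2}, and absorb $(1+m)^{\delta(p)}$ via $(1+m)\le(1+k)(1+|m-k|)$ with $N$ large. Your explicit use of $Q_m^2=Q_m$ and your justification that the series converges in $L^p$ are small clarifications that the paper leaves implicit.
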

\begin{proof}
First, we observe that 
\[
\|\chi u_k\|_{L^p(M)} \le \|\chi \tilde u_k\|_{L^p(M)} + \mathrm{vol}(M)^{-1}\|\chi\|_{L^p}. 
\]
The second term on the right hand side is bounded so it is enough to bound the first term. 
\begin{equation*}
\begin{split}
\|\chi \tilde u_k\|_{L^p(M)} &\le \sum\limits_{m=0}^{\infty} \|Q_m(\chi \tilde u_k)\|_{L^p(M)} \\
&\le C_{M,p}\sum\limits_{m=0}^{\infty}(1+m)^{\delta(p)}\|Q_m(\chi \tilde u_k)\|_{L^2(M)}\\
&\le C_{M,N,p,\chi,x} \sum\limits_{m=0}^{\infty}(1+m)^{\delta(p)}(1+k)^{1/2}(1+|m-k|)^{-N}\\
&\le C_{M,N,p,\chi,x} \sum\limits_{m=0}^{\infty}(1+k)^{\delta(p)+1/2}(1+|m-k|)^{\delta(p)-N}\\
&\le C_{M,p,\chi,x} k^{\delta(p)+1/2},
\end{split}
\end{equation*}
where to go from the first to the second line we used the spectral cluster estimate \ref{Sogge}, to go from the second to the third line we used Lemma \ref{aux2}, to go from the third to the fourth line we used the simple inequality $(1+m)\le (1+k)(1+|m-k|)$ and to go from the forth to the fifth line we chose $N>\delta(p)+1$. 
\end{proof}
\subsection{The dyadic maximal estimate}

\begin{proposition}
For all $2<p<\infty$, $k\geq 2$ and $\chi$ as above, 
\[
\left\|\sup\limits_{k\le \lambda<k+1}|\chi E_{\lambda}(x,\cdot)|\right\|_{L^p(M)} \le C_{M,p,\chi,x} k^{1/2+\delta(p)}. 
\]
\end{proposition}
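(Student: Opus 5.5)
For $k\le\lambda<k+1$ I will split $E_\lambda(x,\cdot)=u_k+(E_\lambda(x,\cdot)-u_k)$, where $u_k=E_k(x,\cdot)$ as in Lemma \ref{aux2}. This gives the pointwise bound
\[
\sup_{k\le\lambda<k+1}|\chi E_\lambda(x,\cdot)|\le |\chi u_k| + \|\chi\|_\infty \max_{0\le r\le N}\Big|\sum_{v=1}^r \phi_{j_v}(x)\overline{\phi_{j_v}(\cdot)}\Big|.
\]
Here $\phi_{j_1},\dots,\phi_{j_N}$ are the eigenfunctions with $\lambda_j\in[k,k+1)$, ordered by nondecreasing eigenvalue. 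For any $\lambda\in[k,k+1)$ the difference $E_\lambda(x,\cdot)-u_k$ is exactly a partial sum $\sum_{v\le r}$ for some $r$. Ties in eigenvalue cause no problem, since the set $\{j:k\le\lambda_j<\lambda\}$ is always an initial segment of this ordering. Since $\chi$ is a fixed bounded function, I can pull out $\|\chi\|_\infty$ pointwise before taking the maximum.

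The first term is handled directly by Proposition \ref{aux3}, which gives $\|\chi u_k\|_{L^p}\le C_{M,p,\chi,x}k^{1/2+\delta(p)}$.

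For the second term I apply Corollary \ref{CKC} with $\psi_v=\overline{\phi_{j_v}}$ and coefficients $c_v=\phi_{j_v}(x)$. The complex conjugates are still orthonormal eigenfunctions in the same cluster, because $\Delta$ is real. The corollary then gives
\[
\Big\|\max_{r}\Big|\sum_{v\le r}c_v\psi_v\Big|\Big\|_p\le C_{M,p}k^{\delta(p)}\Big(\sum_{k\le\lambda_j<k+1}|\phi_j(x)|^2\Big)^{1/2}.
\]
The last factor is $\|Q_k\delta_x\|_{L^2}$, which the local Weyl law bounds by $C_M(1+k)^{1/2}$ uniformly in $x$. This is the same bound already used in Lemma \ref{aux2}. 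So the second term is also $O(k^{1/2+\delta(p)})$. Adding the two contributions with the triangle inequality in $L^p$ finishes the argument.

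The main subtlety is the bookkeeping of the supremum over real $\lambda$ as a maximum over finitely many partial sums in a fixed order, which is what lets Christ–Kiselev apply. One must also check that the constant-mode shift between $u$ and $\tilde u$ causes no issue here. It does not, because only $u_k$ enters the splitting and Proposition \ref{aux3} is stated for $u_k$ itself. All the real analytic work is already contained in Proposition \ref{aux3} and Corollary \ref{CKC}.
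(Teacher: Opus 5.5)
Your proposal is correct and is essentially the paper's own proof. It uses the same splitting $\chi E_\lambda(x,\cdot)=\chi u_k+\chi\sum_{k\le\lambda_j<\lambda}\phi_j(x)\overline{\phi_j(\cdot)}$, Proposition~\ref{aux3} for the first term, and Corollary~\ref{CKC} together with the local Weyl bound $\sum_{k\le\lambda_j<k+1}|\phi_j(x)|^2\le C_M(1+k)$ for the second; your remarks on ties in the eigenvalue ordering and on using the conjugates $\overline{\phi_{j_v}}$ make explicit two small points the paper leaves implicit.
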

\begin{proof}
Write 
\[
\chi(y)E_{\lambda}(x,y) = \chi u_k + \chi(y)\sum\limits_{k\le \lambda_j <\lambda} \phi_j(x)\overline{\phi_j(y)}.
\]
Thus,
\begin{equation*}
\begin{split}
\left\|\sup\limits_{k\le \lambda<k+1}|\chi E_{\lambda}(x,\cdot)|\right\|_{L^p(M)} &\le \|\chi u_k\|_{L^p(M)} + \|\chi\|_{\infty} \left\|\sup\limits_{k\le \lambda<k+1}\left|\sum\limits_{k\le\lambda_j<\lambda}\phi_j(x)\overline{\phi_j(y)}\right|\right\|_{L^p(M)}\\
&\le C_{M,p,\chi,x} k^{1/2+\delta(p)} + C_{M,p,\chi,x} k^{\delta(p)}\left(\sum\limits_{k\le_j \lambda<k+1} |\phi_j(x)|^2\right)^{1/2}\\
&\le C_{M,p,\chi,x}k^{1/2+\delta(p)},
\end{split}
\end{equation*}
where to go from the first to the second line we used \ref{aux3} for the first term and \ref{CKC} for the second and to go from the second to the third we used the local Weyl law. 
\end{proof}
The following is an immediate corollary since the interval $[L,2L]$ can be covered by $O(L)$ unit intervals,
\begin{corollary}
Let $L\geq 2$ and $2<p<\infty$. Then 
\[
\left\|\sup\limits_{L\le \lambda<2L}|\chi E_{\lambda}(x,\cdot)|\right\|_{L^p(M)}\le C_{M,p,\chi,x}L^{1/2+\delta(p)+1/p}. 
\]
\end{corollary}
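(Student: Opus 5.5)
The plan is to reduce the dyadic maximal estimate to the unit-interval maximal estimate from the preceding proposition. Cover $[L,2L)$ by the unit intervals $[k,k+1)$ with integer $k$ ranging over $\lfloor L\rfloor\le k\le 2L$. There are at most $L+2\le 2L$ such integers, and each satisfies $k\ge 2$ because $L\ge 2$. For every $y$ this gives the pointwise bound
\[
\sup_{L\le\lambda<2L}|\chi E_\lambda(x,y)| \le \max_{\lfloor L\rfloor\le k\le 2L}\ \sup_{k\le\lambda<k+1}|\chi E_\lambda(x,y)|.
\]

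The key step is to replace the maximum over the $O(L)$ indices $k$ by an $\ell^p$ sum. This is the only place where the factor $L^{1/p}$ enters. Write $F_k(y)=\sup_{k\le\lambda<k+1}|\chi E_\lambda(x,y)|$. Then $\max_k F_k\le(\sum_k F_k^p)^{1/p}$ pointwise, so
\[
\Bigl\|\max_k F_k\Bigr\|_{L^p(M)}^p\le\sum_{\lfloor L\rfloor\le k\le 2L}\|F_k\|_{L^p(M)}^p .
\]
The preceding proposition gives $\|F_k\|_p\le C_{M,p,\chi,x}k^{1/2+\delta(p)}\le C(2L)^{1/2+\delta(p)}$, since the exponent is positive. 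Summing over at most $2L$ indices yields $\|\max_k F_k\|_p^p\le C\,L\cdot L^{p(1/2+\delta(p))}$. Taking $p$-th roots gives $C_{M,p,\chi,x}L^{1/2+\delta(p)+1/p}$, and the harmless factors $2^{1/2+\delta(p)}$ and $2^{1/p}$ are absorbed into the constant.

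There is no real obstacle. The only points to check are that the supremum over $\lambda$ in each unit interval is measurable, which holds because $E_\lambda(x,\cdot)$ is piecewise constant in $\lambda$ with finitely many jumps, and that every $k$ used satisfies $k\ge 2$, as the proposition requires. Using the $\ell^p$ sum rather than the triangle inequality is what gives the loss $L^{1/p}$ instead of $L$.
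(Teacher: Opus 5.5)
Your proof is correct and is the paper's argument. The paper only notes that $[L,2L)$ is covered by $O(L)$ unit intervals. Your pointwise bound $\max_k F_k\le\bigl(\sum_k F_k^p\bigr)^{1/p}$, combined with the unit-interval proposition for integer $k\ge 2$, is exactly what turns that remark into the $L^{1/p}$ loss.
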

The optimal bound obtained by the method below is for $p=6$ for which the corollary gives 
\begin{equation}\label{mest1}
\left\|\sup\limits_{L\le \lambda<2L}|\chi E_{\lambda}(x,\cdot)|\right\|_{L^6(M)}\le C_{M,\chi,x}L^{5/6}.
\end{equation}

\subsection{Proof of \ref{M2}}

For each integer $l\geq 1$ choose $\chi_l$ which vanishes near $x$ and $\chi_l=1$ on $M\backslash B(x,1/l)$. Fix $l$ and define 
\begin{equation*}
\begin{split}
b_m &= m^{1/6}(\log(m))^{1/6+\eps}\\
\mathcal E_m^l &= \left\{y: \sup\limits_{2^m\le \lambda <2^{m+1}}|\chi_l(y)E_{\lambda(x,y)}|> 2^{5m/6}b_m\right\}
\end{split}
\end{equation*}
Inequality \eqref{mest1} and Chebyshev's inequality give
\[
\mu(\mathcal E_m^l) \le C_{M,\chi,x,l} b_m^{-6} = \frac{C_{M,\chi,x,l}}{m(\log(m))^{1+6\eps}}.
\]
This in particular implies that 
\[
\sum\limits_{m\geq 0} \mu(\mathcal E_m) <\infty,
\]
so the first Borel-Cantelli Lemma implies that 
\[
\mu(\limsup\limits_{m\to\infty} \mathcal E_m^l) = 0.
\]
Define $Z_x^l = M\backslash \limsup\limits_{m\to\infty} \mathcal E_m^l$ and 
\[
Z_x := \bigcap\limits_{l\geq 1} Z_x^l. 
\]
Then for any $y\in Z_x$ there exists $l$ such that $y\in M\backslash B(x,1/l)$ and therefore some $m_0$ such that for all $m\geq m_0$ and $\Lambda\le 2^m$
\[
|E_{\Lambda}(x,y)| \le 2^{5m/6}m^{1/6}(\log(m))^{1/6+\eps}. 
\]
The result follows. 

\bibliography{bibliography}
\bibliographystyle{amsalpha}

\vspace{12pt}
\noindent
Panagiotis Dimakis, Department of Mathematics,University of Maryland\\
College Park 20740, MD, USA.\\
\textit{pdimakis12345@gmail.com}

\end{document}